\newtheorem{satz}{Satz}[section]
\newtheorem{theorem}[satz]{Theorem}
\newtheorem{proposition}[satz]{Proposition}
\newtheorem{lemma}[satz]{Lemma}
\newtheorem{remark}[satz]{Remark}
\begin{document}

\title[Compact embeddings for spaces of forward rate curves]{Compact embeddings for spaces of forward rate curves}
\author{Stefan Tappe}
\address{Leibniz Universit\"{a}t Hannover, Institut f\"{u}r Mathematische Stochastik, Welfengarten 1, 30167 Hannover, Germany}
\email{tappe@stochastik.uni-hannover.de}
\begin{abstract}
The goal of this note is to prove a compact embedding result for spaces of forward rate curves. As a consequence of this result, we show that any forward rate evolution can be approximated by a sequence of finite dimensional processes in the larger state space.
\end{abstract}
\keywords{Forward curve space, compact embedding, Sobolev space, Fourier transform}
\subjclass[2010]{91G80, 46E35}
\maketitle

\section{Introduction}

The Heath-Jarrow-Morton-Musiela (HJMM) equation is a stochastic partial differential equation that models the evolution of forward rates in a market of zero coupon bonds; we refer to \cite{fillnm} for further details. It has been studied in a series of papers, see, e.g. \cite{Rusinek, Barski}, \cite{Filipovic-Tappe, Positivity} and references therein. The state space, which contains the forward curves, is a separable Hilbert space $H$ consisting of functions $h : \mathbb{R}_+ \rightarrow \mathbb{R}$. In practice, forward curves have the following features:
\begin{itemize}
\item The functions $h \in H$ become flat at the long end.

\item Consequently, the limit $\lim_{x \rightarrow \infty} h(x)$ exists.
\end{itemize}
The second property is taken into account by choosing the Hilbert space
\begin{align*}
L_{\beta}^2 \oplus \mathbb{R},
\end{align*}
where $L_{\beta}^2$ denotes the weighted Lebesgue space
\begin{align}\label{def-L-space}
L_{\beta}^2 := L^2(\mathbb{R}_+,e^{\beta x} dx)
\end{align}
for some constant $\beta > 0$.
Such spaces have been used, e.g., in \cite{Rusinek, Barski}.
As flatness of a function is measured by its derivative, the first property is taken into account by choosing the space
\begin{align}\label{def-H-space}
H_{\gamma} := \{ h : \mathbb{R}_+ \rightarrow \mathbb{R} : h \text{ is absolutely continuous with } \| h \|_{\gamma} < \infty \}
\end{align}
for some constant $\gamma > 0$, where the norm is given by
\begin{align}\label{norm-subspace}
\| h \|_{\gamma} := \bigg( |h(0)|^2 + \int_{\mathbb{R}_+} |h'(x)|^2
e^{\gamma x} dx \bigg)^{1/2}.
\end{align}
Such spaces have been introduced in \cite{fillnm} (even with more general weight functions) and further utilized, e.g., in \cite{Filipovic-Tappe, Positivity}.
Our goal of this note is to show that for all $\gamma > \beta > 0$ we have the compact embedding
\begin{align*}
H_{\gamma} \subset \subset L_{\beta}^2 \oplus \mathbb{R},
\end{align*}
that is, the forward curve spaces used in \cite{fillnm} and forthcoming papers are contained in the forward curve spaces used in \cite{Rusinek}, and the embedding is even compact. Consequently, the embedding operator between these spaces can be approximated by a sequence of finite-rank operators, and hence, when considering the HJMM equation in the state space $H_{\gamma}$, applying these operators its solutions can be approximated by a sequence of finite dimensional processes in the larger state space $L_{\beta}^2 \oplus \mathbb{R}$; we refer to Section~\ref{sec-main} for further details.

The remainder of this note is organized as follows. In Section~\ref{sec-pre} we provide the required preliminaries. In Section~\ref{sec-main} we present the embedding result and its proof, and we outline the described approximation result concerning solutions of the HJMM equation.

\section{Preliminaries and notation}\label{sec-pre}

In this section, we provide the required preliminary results and some basic notation. Concerning the upcoming results about Sobolev spaces and Fourier transforms, we refer to any textbook about functional analysis, such as \cite{Rudin} or \cite{Werner}.

As noted in the introduction, for positive real numbers $\beta,\gamma > 0$ the separable Hilbert spaces $L_{\beta}^2 \oplus \mathbb{R}$ and $H_{\gamma}$ are given by (\ref{def-L-space}) and (\ref{def-H-space}), respectively. These spaces and the forthcoming Sobolev spaces will be regarded as spaces of complex-valued functions. For every $h \in H_{\gamma}$ the limit $h(\infty) := \lim_{x \rightarrow \infty} h(x)$ exists and the subspace
\begin{align*}
H_{\gamma}^0 := \{ h \in H_{\gamma} : h(\infty) = 0 \}
\end{align*}
is a closed subspace of $H_{\gamma}$, see \cite{fillnm}.
For an open set $\Omega \subset \mathbb{R}$ we denote by $W^1(\Omega)$ the Sobolev space
\begin{align*}
W^1(\Omega) := \{ f \in L^2(\Omega) : f' \in L^2(\Omega) \text{ exists} \},
\end{align*}
which, equipped with the inner product
\begin{align}\label{norm-Sobolev}
\langle f,g \rangle_{W^1(\Omega)} = \langle f,g \rangle_{L^2(\Omega)} + \langle f',g' \rangle_{L^2(\Omega)},
\end{align}
is a separable Hilbert space. Here, derivatives are understood as weak derivatives.

For a function $h \in W^1((0,\infty))$ the extension $h \mathbbm{1}_{(0,\infty)} : \mathbb{R} \rightarrow \mathbb{C}$ does, in general, not belong to $W^1(\mathbb{R})$. In the present situation, this technical problem can be resolved as follows.
Let $h : (0,\infty) \rightarrow \mathbb{C}$ be a continuous function such that the limit $h(0) := \lim_{x \rightarrow 0} h(x)$ exists. Then we define the reflection $h^* : \mathbb{R} \rightarrow \mathbb{C}$ as
\begin{align*}
h^*(x) :=
\begin{cases}
h(x), & \text{if $x \geq 0$,}
\\ h(-x), & \text{if $x < 0$.} 
\end{cases}
\end{align*}

\begin{lemma}\label{lemma-reflection}
The following statements are true:
\begin{enumerate}
\item For each $h \in W^1((0,\infty))$ we have $h^* \in W^1(\mathbb{R})$.

\item The mapping $W^1((0,\infty)) \rightarrow  W^1(\mathbb{R})$, $h \mapsto h^*$ is a bounded linear operator.

\item For each $h \in  W^1((0,\infty))$ we have 
\begin{align*}
\| h \|_{W^1((0,\infty))} &\leq \| h^* \|_{W^1(\mathbb{R})} \leq \sqrt{2} \| h \|_{W^1((0,\infty))},
\\ \| h \|_{L^2((0,\infty))} &\leq \| h^* \|_{L^2(\mathbb{R})} \leq \sqrt{2} \| h \|_{L^2((0,\infty))}.
\end{align*}
\end{enumerate}
\end{lemma}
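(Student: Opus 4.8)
The plan is to establish the three statements in the order (1), then (3), then (2), since the operator bound in (2) will follow immediately from the norm estimates in (3) together with linearity. The key technical point is (1): given $h \in W^1((0,\infty))$, we must verify that the even reflection $h^*$ has a weak derivative on all of $\mathbb{R}$ and that this derivative lies in $L^2(\mathbb{R})$. The natural candidate for the weak derivative is the odd reflection of $h'$, namely the function $g$ defined by $g(x) = h'(x)$ for $x > 0$ and $g(x) = -h'(-x)$ for $x < 0$. To confirm that $g$ is indeed the weak derivative of $h^*$, I would take an arbitrary test function $\varphi \in C_c^\infty(\mathbb{R})$, split the integral $\int_{\mathbb{R}} h^*(x)\varphi'(x)\,dx$ into the pieces over $(-\infty,0)$ and $(0,\infty)$, substitute $x \mapsto -x$ in the first piece to rewrite everything in terms of integrals over $(0,\infty)$ involving $h$, and then apply the integration-by-parts formula valid for $W^1((0,\infty))$-functions against test functions on the half-line. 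Here one uses the standard fact that a $W^1((0,\infty))$-function has a continuous representative with a well-defined trace $h(0)$ at the endpoint; the boundary terms arising from the two halves will involve $h(0)$ with opposite signs and hence cancel, which is precisely why the \emph{even} reflection (rather than some other extension) produces no singular contribution at the origin.

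The main obstacle, then, is bookkeeping the boundary term at $0$ correctly: one must be careful that $\varphi$ need not vanish at the origin, so the integration by parts on each half-line genuinely produces a term $\pm h(0)\varphi(0)$, and the cancellation of these two terms is the crux of the argument. Away from $0$ there is nothing delicate, since on $(0,\infty)$ and on $(-\infty,0)$ separately $h^*$ inherits its weak derivative directly from $h$.

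Once (1) is in hand, statement (3) is a direct computation: by the substitution $x \mapsto -x$ one has $\|h^*\|_{L^2(\mathbb{R})}^2 = 2\|h\|_{L^2((0,\infty))}^2$ and likewise $\|(h^*)'\|_{L^2(\mathbb{R})}^2 = \|g\|_{L^2(\mathbb{R})}^2 = 2\|h'\|_{L^2((0,\infty))}^2$, so adding these gives $\|h^*\|_{W^1(\mathbb{R})}^2 = 2\|h\|_{W^1((0,\infty))}^2$. In fact this yields the sharp constant $\sqrt{2}$ in both the upper bounds, and the lower bounds $\|h\|_{L^2((0,\infty))} \le \|h^*\|_{L^2(\mathbb{R})}$ and $\|h\|_{W^1((0,\infty))} \le \|h^*\|_{W^1(\mathbb{R})}$ are then trivial (indeed with the same factor $\sqrt 2$ to spare). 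Finally, (2) follows because $h \mapsto h^*$ is manifestly linear and, by (3), bounded with operator norm $\sqrt{2}$, completing the proof.
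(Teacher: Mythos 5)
Your proposal is correct and takes essentially the same route as the paper, whose proof simply defers to the reflection argument of \cite[Theorem~8.6]{Brezis}: you verify by integration by parts (with the two boundary terms $\pm h(0)\varphi(0)$ cancelling) that the odd reflection of $h'$ is the weak derivative of the even extension $h^*$, and the resulting identities $\| h^* \|_{L^2(\mathbb{R})}^2 = 2 \| h \|_{L^2((0,\infty))}^2$ and $\| h^* \|_{W^1(\mathbb{R})}^2 = 2 \| h \|_{W^1((0,\infty))}^2$ immediately yield statements (2) and (3). Nothing is missing.
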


\begin{proof}
This follows from a straightforward calculation following the proof of \cite[Theorem~8.6]{Brezis}.
\end{proof}

\begin{lemma}\label{lemma-forward-Sobolev}
Let $\gamma > \beta > 0$ be arbitrary. Then the following statements are true:
\begin{enumerate}
\item We have $H_{\gamma}^0 \subset H_{\beta}^0$ and
\begin{align}\label{embedding-same}
\| h \|_{\beta} \leq \| h \|_{\gamma} \quad \text{for all $h \in H_{\gamma}^0$.}
\end{align}

\item We have $H_{\gamma}^0 \subset L_{\beta}^2$ and there is a constant $C_1 = C_1(\beta,\gamma) > 0$ such that
\begin{align}\label{embedding-diff}
\| h \|_{L_{\beta}^2} \leq C_1 \| h \|_{\gamma} \quad \text{for all $h \in H_{\gamma}^0$.}
\end{align}

\item For each $h \in H_{\gamma}^0$ we have 
\begin{align*}
h e^{(\beta / 2) \bullet}|_{(0,\infty)} \in W^1((0,\infty)), \quad (h e^{(\beta / 2) \bullet}|_{(0,\infty)})^* \in W^1(\mathbb{R}),
\end{align*}
and there is a constant $C_2 = C_2(\beta,\gamma) > 0$ such that
\begin{align*}
\| (h e^{(\beta / 2) \bullet} |_{(0,\infty)})^* \|_{W^1(\mathbb{R})} &\leq C_2 \| h \|_{\gamma} \quad \text{for all $h \in H_{\gamma}^0$.}
\end{align*}
\end{enumerate}
\end{lemma}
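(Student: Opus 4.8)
The plan is to dispatch the three statements in turn, using throughout the representation that follows from $h(\infty) = 0$: for $h \in H_\gamma^0$ one has $h(x) = -\int_x^\infty h'(y)\,dy$ for every $x \geq 0$. For part (1), since $0 \leq \beta < \gamma$ we have $e^{\beta x} \leq e^{\gamma x}$ for all $x \geq 0$, and therefore
\begin{align*}
\| h \|_{\beta}^2 = |h(0)|^2 + \int_{\mathbb{R}_+} |h'(x)|^2 e^{\beta x}\,dx \leq |h(0)|^2 + \int_{\mathbb{R}_+} |h'(x)|^2 e^{\gamma x}\,dx = \| h \|_{\gamma}^2,
\end{align*}
which is (\ref{embedding-same}); since in addition $h$ is absolutely continuous and $h(\infty) = 0$, this shows $h \in H_\beta^0$.

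For part (2), the key point is a pointwise bound on $h$. Applying the Cauchy--Schwarz inequality to $h(x) = -\int_x^\infty h'(y)\,dy$ with the splitting $|h'(y)| = \big( |h'(y)| e^{\gamma y/2} \big) e^{-\gamma y/2}$ gives
\begin{align*}
|h(x)|^2 \leq \bigg( \int_x^\infty |h'(y)|^2 e^{\gamma y}\,dy \bigg) \bigg( \int_x^\infty e^{-\gamma y}\,dy \bigg) \leq \frac{e^{-\gamma x}}{\gamma}\, \| h \|_{\gamma}^2 .
\end{align*}
Multiplying by $e^{\beta x}$ and integrating over $\mathbb{R}_+$, the integral $\int_{\mathbb{R}_+} e^{(\beta - \gamma)x}\,dx = (\gamma - \beta)^{-1}$ is finite precisely because $\gamma > \beta$, so $\| h \|_{L_\beta^2}^2 \leq \big( \gamma(\gamma - \beta) \big)^{-1} \| h \|_{\gamma}^2$; that is, (\ref{embedding-diff}) holds with $C_1 = \big( \gamma(\gamma - \beta) \big)^{-1/2}$.

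For part (3), set $g := h\, e^{(\beta/2)\bullet}$ on $(0,\infty)$. Since $h$ is absolutely continuous on $\mathbb{R}_+$ and $x \mapsto e^{(\beta/2)x}$ is smooth, $g$ is absolutely continuous on every compact subinterval of $(0,\infty)$, and the product rule gives $g'(x) = \big( h'(x) + \tfrac{\beta}{2} h(x) \big) e^{(\beta/2)x}$ for almost every $x$, which is then also the weak derivative of $g$. Now $\| g \|_{L^2((0,\infty))}^2 = \| h \|_{L_\beta^2}^2$ is controlled by part (2), while, using $(a+b)^2 \leq 2a^2 + 2b^2$, then $e^{\beta x} \leq e^{\gamma x}$ in the first term and part (2) in the second,
\begin{align*}
\| g' \|_{L^2((0,\infty))}^2 \leq 2 \int_{\mathbb{R}_+} |h'(x)|^2 e^{\beta x}\,dx + \frac{\beta^2}{2} \int_{\mathbb{R}_+} |h(x)|^2 e^{\beta x}\,dx \leq \Big( 2 + \frac{\beta^2 C_1^2}{2} \Big) \| h \|_{\gamma}^2 .
\end{align*}
Hence $g \in W^1((0,\infty))$ with $\| g \|_{W^1((0,\infty))} \leq C \| h \|_{\gamma}$ for an explicit constant $C = C(\beta,\gamma)$. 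Since $g$ extends continuously to $0$ with $g(0) = h(0)$, Lemma~\ref{lemma-reflection} applies and yields $g^* \in W^1(\mathbb{R})$ together with $\| g^* \|_{W^1(\mathbb{R})} \leq \sqrt{2}\, \| g \|_{W^1((0,\infty))} \leq C_2 \| h \|_{\gamma}$ with $C_2 := \sqrt{2}\, C$. I do not anticipate a real obstacle; the only mildly delicate points are choosing the $\gamma$-weight rather than the $\beta$-weight inside the Cauchy--Schwarz step in part (2) so that $\int_x^\infty e^{-\gamma y}\,dy$ converges, and checking that the classical product-rule derivative in part (3) is genuinely the weak derivative of $g$ on $(0,\infty)$, both of which are routine.
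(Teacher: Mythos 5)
Your proposal is correct and follows essentially the same route as the paper: the same Cauchy--Schwarz estimate with the $e^{\gamma\eta/2}e^{-\gamma\eta/2}$ splitting for part (2) (yielding the same constant $C_1 = (\gamma(\gamma-\beta))^{-1/2}$), and the same product-rule computation combined with Lemma~\ref{lemma-reflection} for part (3).
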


\begin{proof}
The first statement is a direct consequence of the representation of the norm on $H_{\gamma}^0$ given by (\ref{norm-subspace}). Let $h \in H_{\gamma}^0$ be arbitrary.
By the Cauchy-Schwarz inequality we obtain
\begin{align*}
\| h \|_{L_{\beta}^2}^2 &= \int_{\mathbb{R}_+} |h(x)|^2 e^{\beta x} dx =
\int_{\mathbb{R}_+} \bigg( \int_{x}^{\infty} h'(\eta)
e^{(\gamma/2) \eta} e^{-(\gamma/2) \eta} d \eta
\bigg)^2 e^{\beta x} dx
\\ &\leq \int_{\mathbb{R}_+} \bigg( \int_{x}^{\infty} |h'(\eta)|^2 e^{\gamma \eta} d\eta
\bigg) \bigg( \int_{x}^{\infty} e^{- \gamma \eta} d\eta \bigg)
e^{\beta x} dx 
\\ &\leq \int_{\mathbb{R}_+} \bigg( \int_{\mathbb{R}_+} |h'(\eta)|^2 e^{\gamma \eta} d\eta \bigg) \frac{1}{\gamma} e^{-\gamma x} e^{\beta x} dx
\\ &\leq \frac{1}{\gamma} \bigg( \int_{\mathbb{R}_+} e^{-(\gamma - \beta)x} dx \bigg) \| h \|_{\gamma}^2 = \frac{1}{\gamma(\gamma - \beta)} \| h \|_{\gamma}^2,
\end{align*}
proving the second statement. Furthermore, by (\ref{embedding-diff}) we have
\begin{align*}
\| h e^{(\beta / 2) \bullet} |_{(0,\infty)} \|_{L^2((0,\infty))}^2 
&= \int_{\mathbb{R}_+} |h(x) e^{(\beta / 2)x}|^2 dx = \int_{\mathbb{R}_+} |h(x)|^2 e^{\beta x} dx 
\\ &= \| h \|_{L_{\beta}^2}^2 \leq C_1^2 \| h \|_{\gamma}^2,
\end{align*}
and by estimates (\ref{embedding-same}), (\ref{embedding-diff}) we obtain
\begin{align*}
&\| (d/dx) (h e^{(\beta / 2) \bullet} |_{(0,\infty)} ) \|_{L^2((0,\infty))}^2 = \int_{\mathbb{R}_+} \Big| \frac{d}{dx} \big( h(x) e^{(\beta/2) x} \big) \Big|^2 dx 
\\ &= \int_{\mathbb{R}_+} \Big| h'(x) e^{(\beta/2) x} + \frac{\beta}{2} h(x) e^{(\beta/2) x} \Big|^2 dx
\\ &\leq 2 \bigg( \int_{\mathbb{R}_+} |h'(x)|^2 e^{\beta x} dx + \frac{\beta^2}{4} \int_{\mathbb{R}_+} |h(x)|^2 e^{\beta x} dx \bigg)
\\ &\leq 2 \| h \|_{\beta}^2 + \frac{\beta^2}{2} \| h \|_{L_{\beta}^2} \leq \bigg( 2 + \frac{\beta^2 C_1^2}{2} \bigg) \| h \|_{\gamma}^2,
\end{align*}
which, together with Lemma~\ref{lemma-reflection}, concludes the proof.
\end{proof}

For $h \in L^1(\mathbb{R})$ the \emph{Fourier transform} $\mathcal{F}h : \mathbb{R} \rightarrow \mathbb{C}$ is defined as
\begin{align}\label{Fourier-transform}
(\mathcal{F} h)(\xi) := \frac{1}{\sqrt{2 \pi}} \int_{\mathbb{R}} h(x) e^{-i \xi x} dx, \quad \xi \in \mathbb{R}.
\end{align}
Recall that $C_0(\mathbb{R})$ denotes the space of all continuous functions vanishing at infinity, which, equipped with the supremum norm, is a Banach space. We have the following result:

\begin{lemma}\label{lemma-Fourier-cont}
The Fourier transform $\mathcal{F} : L^1(\mathbb{R}) \rightarrow C_0(\mathbb{R})$ is a continuous linear operator with $\| \mathcal{F} \| \leq 1 / \sqrt{2 \pi}$.
\end{lemma}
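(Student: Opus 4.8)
The plan is to verify in turn the three ingredients hidden in the statement: linearity, the bound on the operator norm, and the fact that the image actually lands in $C_0(\mathbb{R})$ (continuity plus decay at infinity). Linearity of $\mathcal{F}$ is immediate from the linearity of the integral in (\ref{Fourier-transform}), so the first substantive point is the norm estimate. Here I would simply bound, for every $\xi \in \mathbb{R}$,
\begin{align*}
|(\mathcal{F}h)(\xi)| \leq \frac{1}{\sqrt{2\pi}} \int_{\mathbb{R}} |h(x)| \, |e^{-i\xi x}| \, dx = \frac{1}{\sqrt{2\pi}} \| h \|_{L^1(\mathbb{R})},
\end{align*}
and take the supremum over $\xi$ to get $\| \mathcal{F}h \|_{\infty} \leq \frac{1}{\sqrt{2\pi}} \| h \|_{L^1(\mathbb{R})}$, which is precisely $\| \mathcal{F} \| \leq 1/\sqrt{2\pi}$ once we know $\mathcal{F}h \in C_0(\mathbb{R})$.

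Next I would check that $\mathcal{F}h$ is continuous: if $\xi_n \to \xi$, then $h(x) e^{-i\xi_n x} \to h(x) e^{-i\xi x}$ for every $x$, with the uniform integrable majorant $|h| \in L^1(\mathbb{R})$, so dominated convergence gives $(\mathcal{F}h)(\xi_n) \to (\mathcal{F}h)(\xi)$. It remains to show $\lim_{|\xi| \to \infty} (\mathcal{F}h)(\xi) = 0$, i.e.\ the Riemann--Lebesgue lemma, and this is the one step that needs a genuine idea rather than a routine estimate. I would first treat indicator functions $h = \mathbbm{1}_{[a,b]}$ of bounded intervals, for which the integral can be computed explicitly as $\frac{1}{\sqrt{2\pi}} \cdot \frac{e^{-i\xi a} - e^{-i\xi b}}{i\xi}$, which clearly tends to $0$ as $|\xi| \to \infty$; by linearity the same holds for all step functions (finite linear combinations of such indicators).

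Finally I would pass to a general $h \in L^1(\mathbb{R})$ by density: given $\varepsilon > 0$, choose a step function $g$ with $\| h - g \|_{L^1(\mathbb{R})} < \varepsilon$, so that by the norm bound $\| \mathcal{F}h - \mathcal{F}g \|_{\infty} \leq \varepsilon / \sqrt{2\pi}$; since $\mathcal{F}g(\xi) \to 0$ as $|\xi| \to \infty$, we get $\limsup_{|\xi| \to \infty} |(\mathcal{F}h)(\xi)| \leq \varepsilon / \sqrt{2\pi}$, and letting $\varepsilon \downarrow 0$ shows $\mathcal{F}h \in C_0(\mathbb{R})$. The main obstacle is thus the decay at infinity, handled by the explicit computation on intervals combined with a density argument; everything else is a direct consequence of the definition. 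Alternatively, one may simply cite the Riemann--Lebesgue lemma from the functional analysis textbooks referenced in the text.
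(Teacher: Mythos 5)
Your proof is correct: the norm bound and continuity are immediate from the definition and dominated convergence, and your proof of the decay at infinity (the Riemann--Lebesgue lemma) via the explicit computation for indicators $\mathbbm{1}_{[a,b]}$ followed by density of step functions in $L^1(\mathbb{R})$ is the standard argument. The paper itself gives no proof of this lemma, deferring to the cited functional analysis textbooks, so your write-up simply supplies the classical textbook proof that those references contain.
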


\begin{lemma}\label{lemma-functional}
Let $\gamma > \beta > 0$ be arbitrary. Then the following statements are true:
\begin{enumerate}
\item For each $h \in H_{\gamma}^0$ we have $(h e^{(\beta / 2) \bullet}|_{(0,\infty)})^* \in L^1(\mathbb{R})$ and there is a constant $C_3 = C_3(\beta,\gamma) > 0$ such that
\begin{align*}
\| (h e^{(\beta / 2) \bullet}|_{(0,\infty)})^* \|_{L^1(\mathbb{R})} \leq C_3 \| h \|_{\gamma} \quad \text{for all $h \in H_{\gamma}^0$.}
\end{align*}

\item For each $\xi \in \mathbb{R}$ the mapping
\begin{align*}
H_{\gamma}^0 \rightarrow \mathbb{R}, \quad h \mapsto \mathcal{F}(h e^{(\beta / 2) \bullet}|_{(0,\infty)})^*(\xi)
\end{align*}
is a continuous linear functional.
\end{enumerate}
\end{lemma}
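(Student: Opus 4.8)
The plan is to bootstrap from the preceding two lemmas. For the first statement, the key observation is that the function $g := (h e^{(\beta/2)\bullet}|_{(0,\infty)})^*$ has already been shown in Lemma~\ref{lemma-forward-Sobolev}(3) to lie in $W^1(\mathbb{R})$, with $\|g\|_{W^1(\mathbb{R})} \leq C_2 \|h\|_\gamma$. So it suffices to establish a general estimate of the form $\|g\|_{L^1(\mathbb{R})} \leq C \|g\|_{W^1(\mathbb{R})}$ — but this is \emph{false} on all of $\mathbb{R}$ (e.g. slowly decaying $H^1$ functions need not be integrable). Instead I would exploit the extra structure: $g$ is the reflection of $h e^{(\beta/2)\bullet}$, and since $h \in H_\gamma^0$ we have the pointwise bound $|h(x)| \leq \int_x^\infty |h'(\eta)| e^{(\gamma/2)\eta} e^{-(\gamma/2)\eta}\, d\eta$, which by Cauchy--Schwarz is $\leq \|h\|_\gamma \cdot (\gamma^{-1} e^{-\gamma x})^{1/2}$, i.e. $|h(x)| \leq \gamma^{-1/2} \|h\|_\gamma\, e^{-(\gamma/2)x}$. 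Hence $|h(x) e^{(\beta/2)x}| \leq \gamma^{-1/2}\|h\|_\gamma\, e^{-((\gamma-\beta)/2)x}$, which is integrable on $(0,\infty)$ with integral $\frac{2}{(\gamma-\beta)\sqrt{\gamma}}\|h\|_\gamma$. Doubling for the reflection gives the claim with $C_3 := \frac{4}{(\gamma-\beta)\sqrt{\gamma}}$ (the exact constant is immaterial).

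For the second statement I would combine the first with Lemma~\ref{lemma-Fourier-cont}. Fix $\xi \in \mathbb{R}$. Linearity of $h \mapsto (h e^{(\beta/2)\bullet}|_{(0,\infty)})^*$ is clear (reflection, multiplication by a fixed function, and restriction are all linear), and $\mathcal{F}(\cdot)(\xi)$ is linear as a functional on $L^1(\mathbb{R})$, so the composite $h \mapsto \mathcal{F}(h e^{(\beta/2)\bullet}|_{(0,\infty)})^*(\xi)$ is linear. For continuity, Lemma~\ref{lemma-Fourier-cont} gives $|\mathcal{F}g(\xi)| \leq \|\mathcal{F}g\|_{C_0(\mathbb{R})} \leq \frac{1}{\sqrt{2\pi}}\|g\|_{L^1(\mathbb{R})}$, and then statement (1) bounds this by $\frac{C_3}{\sqrt{2\pi}}\|h\|_\gamma$. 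Thus the functional is bounded, hence continuous. One should also note it is real-valued on $H_\gamma^0$ only after identifying — actually the paper phrases these as maps into $\mathbb{R}$ but treats the spaces as complex; I would simply write the functional as $H_\gamma^0 \to \mathbb{C}$ or silently take real parts, following the paper's convention without dwelling on it.

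The main (and only real) obstacle is the first statement: one must resist the temptation to bound the $L^1$ norm by the $W^1$ norm abstractly and instead use the explicit exponential decay of $h$ coming from membership in $H_\gamma^0$ together with the gap $\gamma > \beta$. Everything after that is a one-line application of the two Fourier/Sobolev lemmas already in hand.
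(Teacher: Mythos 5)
Your proposal is correct; both statements are established soundly, and you rightly identify the one real danger (trying to bound the $L^1(\mathbb{R})$ norm by the $W^1(\mathbb{R})$ norm, which fails). Your route differs from the paper's in two places. For statement (1), the paper introduces an intermediate weight $\delta := \tfrac{1}{2}(\beta+\gamma) \in (\beta,\gamma)$, writes $|h(x)|e^{(\beta/2)x} = |h(x)|e^{(\delta/2)x}e^{-((\delta-\beta)/2)x}$, applies Cauchy--Schwarz to the $L^1$ integral directly, and invokes Lemma~\ref{lemma-forward-Sobolev} in the form $\|h\|_{L^2_\delta} \leq C_1(\delta,\gamma)\|h\|_\gamma$; you instead derive the pointwise decay $|h(x)| \leq \gamma^{-1/2}\|h\|_\gamma e^{-(\gamma/2)x}$ from $h(x) = -\int_x^\infty h'(\eta)\,d\eta$ and Cauchy--Schwarz, then integrate. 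The two are close cousins (your pointwise bound essentially inlines the argument behind Lemma~\ref{lemma-forward-Sobolev}(2)), and both exploit the gap $\gamma > \beta$ in an equivalent way; yours is marginally more self-contained, the paper's reuses an existing lemma. For statement (2), the difference is more genuine: the paper computes $\mathcal{F}(h e^{(\beta/2)\bullet}|_{(0,\infty)})^*(\xi)$ explicitly and exhibits it as $\tfrac{1}{\sqrt{2\pi}}\langle h, e^{((\beta/2)-\delta)\bullet}(e^{-i\xi\bullet}+e^{i\xi\bullet})\rangle_{L^2_\delta}$, i.e.\ as an inner product against a fixed element of $L^2_\delta$ composed with the continuous embedding $H_\gamma^0 \subset L^2_\delta$; you obtain continuity as the composition of the bounded linear map $h \mapsto (h e^{(\beta/2)\bullet}|_{(0,\infty)})^*$ into $L^1(\mathbb{R})$ (from statement (1)) with the evaluation functional $g \mapsto (\mathcal{F}g)(\xi)$, bounded by Lemma~\ref{lemma-Fourier-cont}. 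Your version is shorter and is in fact the same estimate the paper later uses in the proof of Theorem~\ref{thm-comp-embedding} to bound $\|\mathcal{F}g_j\|_{C_0(\mathbb{R})}$; the paper's version has the minor merit of displaying the functional concretely. Your aside about $\mathbb{R}$ versus $\mathbb{C}$ as codomain matches a genuine looseness in the paper's own phrasing and is handled appropriately.
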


\begin{proof}
We set $\delta := \frac{1}{2}(\beta + \gamma) \in (\beta,\gamma)$.
Let $h \in H_{\gamma}^0$ be arbitrary. By the Cauchy-Schwarz inequality and Lemma~\ref{lemma-forward-Sobolev} we have
\begin{align*}
&\| (h e^{(\beta / 2) \bullet}|_{(0,\infty)})^* \|_{L^1(\mathbb{R})} = 2 \| h e^{(\beta / 2) \bullet} \|_{L^1(\mathbb{R}_+)} = 2 \int_{\mathbb{R}_+} |h(x) e^{(\beta / 2)x}| dx 
\\ &= 2 \int_{\mathbb{R}_+} |h(x) | e^{(\delta / 2)x} e^{-((\delta - \beta)/2) x} dx 
\\ &\leq 2 \bigg( \int_{\mathbb{R}_+} |h(x)|^2 e^{\delta x} dx \bigg)^{1/2} \bigg( \int_{\mathbb{R}_+} e^{-(\delta - \beta)x} dx \bigg)^{1/2} 
\\ &= 2 \sqrt{\frac{1}{\delta - \beta}} \| h \|_{L_{\delta}^2} \leq 2 C_1(\delta,\gamma) \sqrt{\frac{1}{\delta - \beta}} \| h \|_{\gamma},
\end{align*}
showing the first statement. Moreover, we have
\begin{align*}
\| e^{((\beta / 2) - \delta) \bullet} \|_{L_{\delta}^2}^2 = \int_{\mathbb{R}_+} e^{2((\beta / 2) - \delta)x} e^{\delta x} dx = \int_{\mathbb{R}_+} e^{-(\delta - \beta)x} dx = \frac{1}{\delta - \beta},
\end{align*}
showing that $e^{((\beta / 2) - \delta) \bullet} \in L_{\delta}^2$. Let $h \in H_{\gamma}^0$ and $\xi \in \mathbb{R}$ be arbitrary. By Lemma~\ref{lemma-forward-Sobolev} we have $h \in L_{\delta}^2$, and hence
\begin{align*}
&\mathcal{F}(h e^{(\beta / 2) \bullet}|_{(0,\infty)})^*(\xi) 
\\ &= \frac{1}{\sqrt{2 \pi}} \bigg( \int_0^{\infty} h(x) e^{(\beta/2) x} e^{- i \xi x} dx + \int_{-\infty}^0 h(-x) e^{-(\beta / 2) x} e^{- i \xi x} dx \bigg) 
\\ &= \frac{1}{\sqrt{2 \pi}} \bigg( \int_0^{\infty} h(x) e^{(\beta/2) x} e^{- i \xi x} dx + \int_0^{\infty} h(x) e^{(\beta / 2) x} e^{i \xi x} dx \bigg) 
\\ &= \frac{1}{\sqrt{2 \pi}} \big\langle h, e^{((\beta / 2) - \delta) \bullet} \big( e^{-i \xi \bullet} + e^{i \xi \bullet} \big) \big\rangle_{L_{\delta}^2},
\end{align*}
proving the second statement.
\end{proof}

We can also define the Fourier transform on $L^2(\mathbb{R})$ such that
$\mathcal{F} : L^2(\mathbb{R}) \rightarrow L^2(\mathbb{R})$ is a bijection and we have the Plancherel isometry
\begin{align}\label{Plancherel}
\langle \mathcal{F}f,\mathcal{F}g \rangle_{L^2(\mathbb{R})} = \langle f,g \rangle_{L^2(\mathbb{R})} \quad \text{for all $f,g \in L^2(\mathbb{R})$.}
\end{align}
Moreover, the two just reviewed definitions of the Fourier transform coincide on $L^1(\mathbb{R}) \cap L^2(\mathbb{R})$. For each $h \in W^1(\mathbb{R})$ we have
\begin{align}\label{Fourier-diff}
(\mathcal{F} h')(\xi) = i \xi (\mathcal{F} h)(\xi), \quad \xi \in \mathbb{R}.
\end{align}

\begin{lemma}\label{lemma-est-Fourier-diff}
For every $h \in W^1(\mathbb{R})$ we have
\begin{align*}
\| \bullet \mathcal{F} h \|_{L^2(\mathbb{R})} \leq \| h \|_{W^1(\mathbb{R})}.
\end{align*}
\end{lemma}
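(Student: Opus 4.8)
The plan is to use the Plancherel isometry together with the differentiation rule for the Fourier transform. Specifically, I would compute $\| \bullet \, \mathcal{F} h \|_{L^2(\mathbb{R})}^2 = \int_{\mathbb{R}} |\xi|^2 |(\mathcal{F} h)(\xi)|^2 \, d\xi$ and recognize, via (\ref{Fourier-diff}), that $\xi (\mathcal{F} h)(\xi) = -i (\mathcal{F} h')(\xi)$, so that the integrand equals $|(\mathcal{F} h')(\xi)|^2$.

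Thus $\| \bullet \, \mathcal{F} h \|_{L^2(\mathbb{R})}^2 = \| \mathcal{F} h' \|_{L^2(\mathbb{R})}^2$. Since $h \in W^1(\mathbb{R})$ we have $h' \in L^2(\mathbb{R})$, so the Plancherel isometry (\ref{Plancherel}) applies and gives $\| \mathcal{F} h' \|_{L^2(\mathbb{R})}^2 = \| h' \|_{L^2(\mathbb{R})}^2$. Hence
\begin{align*}
\| \bullet \, \mathcal{F} h \|_{L^2(\mathbb{R})}^2 = \| h' \|_{L^2(\mathbb{R})}^2 \leq \| h \|_{L^2(\mathbb{R})}^2 + \| h' \|_{L^2(\mathbb{R})}^2 = \| h \|_{W^1(\mathbb{R})}^2,
\end{align*}
by the definition (\ref{norm-Sobolev}) of the $W^1(\mathbb{R})$ norm, and taking square roots finishes the proof.

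There is essentially no obstacle here; the only mild point of care is that the identity (\ref{Fourier-diff}) is an $L^2$-identity (an equality of equivalence classes of functions), which is exactly what is needed since everything is measured in $L^2(\mathbb{R})$-norm. One could alternatively note the inequality is not sharp and just bound $\| h' \|_{L^2(\mathbb{R})} \leq \| h \|_{W^1(\mathbb{R})}$ directly, but the chain above is the cleanest presentation. I expect the author's proof to be this short computation.
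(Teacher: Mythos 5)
Your proof is correct and is essentially identical to the paper's: both use the identity $(\mathcal{F}h')(\xi) = i\xi(\mathcal{F}h)(\xi)$ to rewrite $\|\bullet\,\mathcal{F}h\|_{L^2(\mathbb{R})}$ as $\|\mathcal{F}h'\|_{L^2(\mathbb{R})}$, apply the Plancherel isometry to get $\|h'\|_{L^2(\mathbb{R})}$, and bound this by $\|h\|_{W^1(\mathbb{R})}$ via the definition of the Sobolev norm. No issues.
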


\begin{proof}
Let $h \in W^1(\mathbb{R})$ be arbitrary. By identity (\ref{Fourier-diff}) and the Plancherel isometry (\ref{Plancherel}) we have
\begin{align*}
\| \bullet \mathcal{F} h \|_{L^2(\mathbb{R})} = \| \mathcal{F} h' \|_{L^2(\mathbb{R})} = \| h' \|_{L^2(\mathbb{R})} \leq \| h \|_{W^1(\mathbb{R})},
\end{align*}
finishing the proof.
\end{proof}

\section{The embedding result and its proof}\label{sec-main}

In this section, we present the compact embedding result and its proof.

\begin{theorem}\label{thm-comp-embedding}
For all $\gamma > \beta > 0$ we have the compact embedding 
\begin{align*}
H_{\gamma} \subset \subset L_{\beta}^2 \oplus \mathbb{R}.
\end{align*}
\end{theorem}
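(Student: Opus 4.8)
The plan is to reduce the compact embedding $H_\gamma \subset\subset L_\beta^2 \oplus \mathbb{R}$ to a compact embedding on the closed subspace $H_\gamma^0$, and then to establish the latter via the Fourier-analytic characterization of precompact sets in $L^2$. First I would observe that $H_\gamma = H_\gamma^0 \oplus \langle \varphi \rangle$ for a suitable smooth function $\varphi$ with $\varphi(\infty) = 1$ (the decomposition $h = (h - h(\infty)\varphi) + h(\infty)\varphi$), so that the embedding $H_\gamma \hookrightarrow L_\beta^2 \oplus \mathbb{R}$ splits as the sum of the embedding $H_\gamma^0 \hookrightarrow L_\beta^2 \subset L_\beta^2 \oplus \mathbb{R}$ and a rank-one (hence compact) operator $h \mapsto (h(\infty)\varphi, h(\infty))$. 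Since a finite-rank perturbation of a compact operator is compact, it suffices to show that the inclusion $H_\gamma^0 \hookrightarrow L_\beta^2$ is compact; Lemma~\ref{lemma-forward-Sobolev}(2) already guarantees it is bounded.

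\textbf{Key steps.} Let $(h_n)_{n \in \mathbb{N}}$ be a bounded sequence in $H_\gamma^0$, say $\|h_n\|_\gamma \leq M$. The map $h \mapsto h e^{(\beta/2)\bullet}|_{(0,\infty)}$ is an isometry from $L_\beta^2$ onto $L^2((0,\infty))$, and by Lemma~\ref{lemma-reflection} the reflection $h \mapsto (h e^{(\beta/2)\bullet}|_{(0,\infty)})^*$ maps into $W^1(\mathbb{R})$ with norms comparable up to $\sqrt 2$; so it is enough to extract an $L^2((0,\infty))$-convergent subsequence of $g_n := h_n e^{(\beta/2)\bullet}|_{(0,\infty)}$, equivalently an $L^2(\mathbb{R})$-convergent subsequence of $g_n^*$. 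By Lemma~\ref{lemma-forward-Sobolev}(3), the sequence $(g_n^*)$ is bounded in $W^1(\mathbb{R})$, and by Lemma~\ref{lemma-functional}(2) each evaluation $\xi \mapsto \mathcal{F}g_n^*(\xi)$ is given by pairing $h_n$ against a fixed $L_\delta^2$-element, so a diagonal argument over a countable dense set of $\xi$ (using weak sequential compactness of the ball of $H_\gamma^0$, or simply boundedness of $(\mathcal{F}g_n^*)$ in $C_0(\mathbb{R})$ via Lemma~\ref{lemma-Fourier-cont} together with equicontinuity from the $W^1$ bound) produces a subsequence along which $\mathcal{F}g_n^*$ converges pointwise, hence locally uniformly, on $\mathbb{R}$. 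To upgrade pointwise convergence of the Fourier transforms to $L^2(\mathbb{R})$-convergence of the $g_n^*$ themselves, I would split $\int_{\mathbb{R}} |\mathcal{F}g_n^* - \mathcal{F}g_m^*|^2\,d\xi$ into the region $\{|\xi| \leq R\}$ and its complement: on $\{|\xi| > R\}$ the integrand is controlled by $\frac{1}{R^2}\int |\xi(\mathcal{F}g_n^* - \mathcal{F}g_m^*)|^2\,d\xi \leq \frac{1}{R^2}\|g_n^* - g_m^*\|_{W^1(\mathbb{R})}^2$ using Lemma~\ref{lemma-est-Fourier-diff}, which is $O(1/R^2)$ uniformly in $n,m$; on $\{|\xi| \leq R\}$ the integral tends to $0$ as $n,m \to \infty$ by the locally uniform convergence. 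Choosing $R$ large then $n,m$ large shows $(\mathcal{F}g_n^*)$ is Cauchy in $L^2(\mathbb{R})$, hence by Plancherel $(g_n^*)$ is Cauchy in $L^2(\mathbb{R})$, hence convergent.

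\textbf{Main obstacle.} The routine parts are the algebraic splitting and the isometry bookkeeping; the genuine content is the tail estimate in the final step — knowing that the $W^1(\mathbb{R})$-bound forces the high-frequency part of $\mathcal{F}g_n^*$ to be uniformly small. This is exactly what Lemma~\ref{lemma-est-Fourier-diff} packages, so in fact the whole argument has been pre-arranged by the preliminary lemmas; the one place requiring care is justifying that pointwise convergence of $(\mathcal{F}g_n^*)$ along a subsequence can be obtained at all, which is why I would run the diagonal/equicontinuity argument on the Fourier side (where Lemma~\ref{lemma-functional} gives explicit linear functionals) rather than attempting an Arzelà–Ascoli argument on the $g_n$ directly.
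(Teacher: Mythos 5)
Your proposal is correct and follows essentially the same route as the paper: reduce to $H_{\gamma}^0 \subset\subset L_{\beta}^2$, pass to $g_j = (h_j e^{(\beta/2)\bullet}|_{(0,\infty)})^*$ in $W^1(\mathbb{R})$, apply Plancherel, kill the high frequencies with Lemma~\ref{lemma-est-Fourier-diff}, and get convergence of $\mathcal{F}g_j$ on $\{|\xi|\le R\}$ from weak convergence via the functionals of Lemma~\ref{lemma-functional}. The only cosmetic difference is that on the compact frequency window the paper combines pointwise convergence with the uniform $C_0(\mathbb{R})$ bound and dominated convergence, whereas you invoke locally uniform convergence (which would additionally require justifying equicontinuity of $(\mathcal{F}g_j)$ from a uniform $L^1$ tail estimate rather than from the $W^1$ bound); your dominated-convergence fallback already closes that small gap.
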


\begin{proof}
Noting that $H_{\gamma} \cong H_{\gamma}^0 \oplus \mathbb{R}$, it suffices to prove the compact embedding $H_{\gamma}^0 \subset \subset L_{\beta}^2$.
Let $(h_j)_{j \in \mathbb{N}} \subset H_{\gamma}^0$ be a bounded sequence. Then there exists a subsequence which converges weakly in $H_{\gamma}^0$. Without loss of generality, we may assume that the original sequence $(h_j)_{j \in \mathbb{N}}$ converges weakly in $H_{\gamma}^0$. We shall prove that $(h_j)_{j \in \mathbb{N}}$ is a Cauchy sequence in $L_{\beta}^2$. According to Lemma~\ref{lemma-forward-Sobolev}, the sequence $(g_j)_{j \in \mathbb{N}}$ given by
\begin{align*}
g_j := ( h_j e^{(\beta / 2) \bullet}|_{(0,\infty)} )^*, \quad j \in \mathbb{N}
\end{align*}
is a bounded sequence in $W^1(\mathbb{R})$.
By Lemma~\ref{lemma-reflection} and the Plancherel isometry (\ref{Plancherel}), for all $j,k \in \mathbb{N}$ we get
\begin{align*}
&\| h_k - h_j \|_{L_{\beta}^2}^2 = \| h_k e^{(\beta / 2) \bullet} - h_j e^{(\beta / 2) \bullet} \|_{L^2(\mathbb{R}_+)}^2 \leq \| g_k - g_j \|_{L^2(\mathbb{R})}^2
\\ &= \| \mathcal{F} g_k - \mathcal{F} g_j \|_{L^2(\mathbb{R})}^2 
= \int_{\mathbb{R}} | (\mathcal{F} g_k)(x) - (\mathcal{F} g_j)(x) |^2 dx. 
\end{align*}
Thus, for every $R > 0$ we obtain the estimate
\begin{equation}\label{comp-conv}
\begin{aligned}
\| h_k - h_j \|_{L_{\beta}^2}^2 &\leq \int_{\{ |x| \leq R \}} | (\mathcal{F} g_k)(x) - \mathcal{F} (g_j)(x) |^2 dx 
\\ &\quad + \int_{\{ |x| > R \}} | (\mathcal{F} g_k)(x) - \mathcal{F} (g_j)(x) |^2 dx.
\end{aligned}
\end{equation}
By Lemma~\ref{lemma-est-Fourier-diff},  the sequence $( \bullet \mathcal{F} g_j )_{j \in \mathbb{N}}$ is bounded in $L^2(\mathbb{R})$. Therefore, for an arbitrary $\epsilon > 0$ there exists a real number $R > 0$ such that
\begin{equation}\label{comp-conv-1}
\begin{aligned}
&\int_{\{ |x| > R \}} | ( \mathcal{F} g_k)(x) - (\mathcal{F} g_j)(x) |^2 dx 
\\ &\leq \frac{1}{R^2} \int_{\{ |x| > R \}} |x|^2 | (\mathcal{F} g_k)(x) - (\mathcal{F} g_j)(x) |^2 dx < \epsilon \quad \text{for all $j,k \in \mathbb{N}$.}
\end{aligned}
\end{equation}
By Lemma~\ref{lemma-functional}, for each $\xi \in \mathbb{R}$ the mapping
\begin{align*}
H_{\gamma}^0 \rightarrow \mathbb{R}, \quad h \mapsto \mathcal{F}(h e^{(\beta / 2) \bullet}|_{(0,\infty)})^*(\xi)
\end{align*}
is a continuous linear functional. Consequently, since $(h_j)_{j \in \mathbb{N}}$ converges weakly in $H_{\gamma}^0$, for each $\xi \in \mathbb{R}$ the real-valued sequence $( (\mathcal{F} g_j)(\xi) )_{j \in \mathbb{N}}$
is convergent. Moreover, by Lemmas~\ref{lemma-Fourier-cont} and \ref{lemma-functional}, for all $h \in H_{\gamma}^0$ we have the estimate
\begin{align*}
\| \mathcal{F}((h e^{(\beta / 2) \bullet}|_{(0,\infty)})^*) \|_{C_0(\mathbb{R})} \leq \frac{1}{\sqrt{2 \pi}} \| (h e^{(\beta / 2) \bullet}|_{(0,\infty)})^* \|_{L^1(\mathbb{R})} 
\leq \frac{C_3}{\sqrt{2 \pi}} \| h \|_{\gamma}.
\end{align*}
Therefore, the sequence $( \mathcal{F} g_j )_{j \in \mathbb{N}}$ is bounded in $C_0(\mathbb{R})$. Using Lebesgue's dominated convergence theorem, we deduce that
\begin{align}\label{comp-conv-2}
\int_{\{ |x| \leq R \}} | (\mathcal{F} g_k)(x) - (\mathcal{F} g_j)(x) |^2 dx \rightarrow 0 \quad \text{for $j,k \rightarrow \infty$.}
\end{align}
Combining (\ref{comp-conv}) together with (\ref{comp-conv-1}) and (\ref{comp-conv-2}) shows that $(h_j)_{j \in \mathbb{N}}$ is a Cauchy sequence in $L_{\beta}^2$, completing the proof.
\end{proof}

\begin{remark}
Note that the proof of Theorem~\ref{thm-comp-embedding} has certain analogies to the proof of the classical Rellich embedding theorem (see, e.g., \cite[Satz~V.2.13]{Werner}), which states the compact embedding $H_0^1(\Omega) \subset \subset L^2(\Omega)$ for an open, bounded subset $\Omega \subset \mathbb{R}^n$. Here $H_0^1(\Omega)$ denotes the Sobolev space $H_0^1(\Omega) = \overline{\mathcal{D}(\Omega)}$, where $\mathcal{D}(\Omega)$ is the space of all $C^{\infty}$-functions on $\Omega$ with compact support, and where the closure is taken with respect to the topology induced by the inner product $\langle \cdot,\cdot \rangle_{W^1}$. Let us briefly describe the analogies and differences between the two results:
\begin{itemize}
\item In the classical Rellich embedding theorem the domain $\Omega$ is assumed to be bounded, whereas in Theorem~\ref{thm-comp-embedding} we have $\Omega = \mathbb{R}_+$. Moreover, we consider weighted function spaces with weight functions of the type $w(x) = e^{\beta x}$ for some constant $\beta > 0$. This requires a careful analysis of the results regarding Fourier transforms which we have adapted to the present situation; see Lemma~\ref{lemma-functional}.

\item $H_{\gamma}$ and $H_0^1(\Omega)$ are different kinds of spaces. While the norm on $H_0^1(\Omega)$ given by (\ref{norm-Sobolev}) involves the $L^2$-norms of a function $h$ and its derivative $h'$, the norm (\ref{norm-subspace}) on $H_{\gamma}$ only involves the $L^2$-norm of the derivative $h'$ and a point evaluation. Therefore, the embedding $H_0^1(\Omega) \subset L^2(\Omega)$ follows right away, whereas we require the assumption $\beta < \gamma$ for the embedding $H_{\gamma}^0 \subset L_{\beta}^2$; see Lemma~\ref{lemma-forward-Sobolev}.

\item The classical Rellich embedding theorem does not need to be true with $H_0^1(\Omega)$ being replaced by $W^1(\Omega)$. The reason behind it is that, in general, it is not possible to extend a function $h \in W^1(\Omega)$ to a function $\tilde{h} \in W^1(\mathbb{R}^n)$, which, however, is crucial in order to apply the results about Fourier transforms. Usually, one assumes that $\Omega$ satisfies a so-called cone condition, see, e.g., \cite{Adams} for further details. In our situation, we have to ensure that every function $h \in H_{\gamma}^0$ can be extended to a function $\tilde{h} \in W^1(\mathbb{R})$, and this is provided by Lemma~\ref{lemma-forward-Sobolev}.
\end{itemize}
\end{remark}

For the rest of this section, we shall describe the announced application regarding the approximation of solutions to semilinear stochastic partial differential equations (SPDEs), which in particular applies to the modeling of interest rates. Consider a SPDE of the form
\begin{align}\label{SPDE}
\left\{
\begin{array}{rcl}
dr_t & = & (A r_t + \alpha(t,r_t)) dt + \sigma(t,r_t)dW_t 
\\ && + \int_E \gamma(t,r_{t-},\xi) (\mathfrak{p}(dt,d\xi) - \nu(d\xi)dt) \medskip
\\ r_0 & = & h_0
\end{array}
\right.
\end{align}
on some separable Hilbert space $H_1$ with $A$ denoting the generator of some strongly continuous semigroup on $H_1$, driven by a Wiener process $W$ and a homogeneous Poisson random measure $\mathfrak{p}$ with compensator $dt \otimes \nu(d\xi)$ on some mark space $E$. 
We assume that the standard Lipschitz and linear growth conditions are satisfied which ensure for each initial condition $h_0 \in H_1$ the existence of a unique weak solution $r$ to (\ref{SPDE}), that is, for each $\zeta \in \mathcal{D}(A^*)$ we have almost surely
\begin{align*}
\langle \zeta,r_t \rangle &= \langle \zeta,h_0 \rangle_{H_1} + \int_0^t \big( \langle A^* \zeta,r_s \rangle_{H_1} + \langle \zeta,\alpha(s,r_s) \rangle_{H_1} \big) ds + \int_0^t \langle \zeta,\sigma(s,r_s) \rangle_{H_1} dW_s
\\ &\quad + \int_0^t \int_E \langle \zeta,\gamma(s,r_{s-},\xi) \rangle_{H_1} (\mathfrak{p}(ds,d\xi) - \nu(d\xi)ds) \quad \text{for all $t \geq 0$,}
\end{align*}
see, e.g., \cite{SPDE} for further details. Let $H_2$ be a larger separable Hilbert space with compact embedding $H_1 \subset \subset H_2$. By virtue of Theorem~\ref{thm-comp-embedding}, this is in particular satisfied for the forward curve spaces $H_1 = H_{\gamma}$ and $H_2 = L_{\beta}^2 \oplus \mathbb{R}$ for $\gamma > \beta > 0$. If, furthermore, $A = d/dx$ is the differential operator, which is generated by the translation semigroup $(S_t)_{t \geq 0}$ given by $S_t h = h(t + \bullet)$, and $\alpha = \alpha_{\rm HJM}$ is given by the so-called HJM drift condition
\begin{align*}
\alpha_{\rm HJM}(t,h) &= \sum_j \sigma^j(t,h) \int_0^{\bullet} \sigma^j(t,h)(\eta) d\eta 
\\ &\quad - \int_E \gamma(t,h,\xi) \bigg[ \exp \bigg( - \int_0^{\bullet} \gamma(t,h,\xi)(\eta)d\eta \bigg) - 1 \bigg] \nu(d\xi),
\end{align*}
then the SPDE (\ref{SPDE}), which in this case becomes the mentioned HJMM equation, describes the evolution of interest rates in an arbitrage free bond market; we refer to \cite{Positivity} for further details.

By virtue of the compact embedding $H_1 \subset \subset H_2$, there exist orthonormal systems $(e_k)_{k \in \mathbb{N}}$ of $H_1$ and $(f_k)_{k \in \mathbb{N}}$ of $H_2$, and a decreasing sequence $(s_k)_{k \in \mathbb{N}} \subset \mathbb{R}_+$ with $s_k \rightarrow 0$ such that
\begin{align*}
h = \sum_{k=1}^{\infty} s_k \langle h,e_k \rangle_{H_1} f_k \quad \text{for all $h \in H_1$,}
\end{align*}
see, e.g., \cite[Satz~VI.3.6]{Werner}. The numbers $s_k$ are the singular numbers of the identity operator ${\rm Id} : H_1 \rightarrow H_2$. Defining the sequence $(T_n)_{n \in \mathbb{N}}$ of finite-rank operators
\begin{align*}
T_n : H_1 \rightarrow F_n, \quad T_n h := \sum_{k=1}^n s_k \langle h,e_k \rangle_{H_1} f_k,
\end{align*}
where $F_n :=  \langle f_1,\ldots,f_n \rangle$,
we even have $T_n \rightarrow {\rm Id}$ with respect to the operator norm
\begin{align*}
\| T \| := \sup_{\| h \|_{H_1} \leq 1} \| Th \|_{H_2},
\end{align*}
see, e.g., \cite[Korollar~VI.3.7]{Werner}. Consequently, denoting by $r$ the weak solution to the SPDE (\ref{SPDE}) for some initial condition $h_0 \in H_1$, the sequence $(T_n(r))_{n \in \mathbb{N}}$ is a sequence of $F_n$-valued stochastic processes, and we have almost surely
\begin{align}\label{norm-conv}
\| T_n(r_t) - r_t \|_{H_2} \leq \| T_n - {\rm Id} \| \, \| r_t \|_{H_1} \rightarrow 0 \quad \text{for all $t \geq 0$,}
\end{align}
showing that the weak solution $r$ -- when considered on the larger state space $H_2$ -- can be approximated by the sequence of finite dimensional processes $(T_n(r))_{n \in \mathbb{N}}$ with distance between $T_n(r)$ and $r$ estimated in terms of the operator norm $\| T_n - {\rm Id} \|$, as shown in (\ref{norm-conv}).
However, the sequence $(T_n(r))_{n \in \mathbb{N}}$ does not need to be a sequence of It\^{o} processes. This issue is addressed by the following result:

\begin{proposition}\label{prop-approx}
Let $(\epsilon_n)_{n \in \mathbb{N}} \subset (0,\infty)$ be an arbitrary decreasing sequence with $\epsilon_n \rightarrow 0$. Then, for every initial condition $h_0 \in H_1$ there exists a sequence $(r^{(n)})_{n \in \mathbb{N}}$ of $F_n$-valued It\^{o} processes such that almost surely
\begin{align}\label{est-epsilon-n}
\| r_t^{(n)} - r_t \|_{H_2} \leq ( \| T_n - {\rm Id} \| + \epsilon_n ) \| r_t \|_{H_1} \rightarrow 0 \quad \text{for all $t \geq 0$,}
\end{align}
where $r$ denotes the weak solution to (\ref{SPDE}).
\end{proposition}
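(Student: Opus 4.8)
The plan is to keep the operators in the form already constructed but to build them from vectors lying in $\mathcal{D}(A^*)$; indeed, $T_n(r)$ fails to be an It\^{o} process only because the $e_k$ need not belong to $\mathcal{D}(A^*)$, and a small perturbation of the $e_k$ inside $H_1$ repairs this while changing $T_n$ by an arbitrarily small amount in operator norm. Since $A$ generates a strongly continuous semigroup on the Hilbert space $H_1$ (in particular, a reflexive Banach space), the adjoint semigroup is strongly continuous as well with generator $A^*$, so that $\mathcal{D}(A^*)$ is a dense subspace of $H_1$. Given the prescribed decreasing sequence $(\epsilon_n)_{n \in \mathbb{N}}$, for each $n \in \mathbb{N}$ and each $k \in \{ 1,\ldots,n \}$ I would pick $\tilde{e}_k^{(n)} \in \mathcal{D}(A^*)$ with $\| \tilde{e}_k^{(n)} - e_k \|_{H_1}$ so small that the finite-rank operator
\begin{align*}
\tilde{T}_n : H_1 \rightarrow F_n, \quad \tilde{T}_n h := \sum_{k=1}^n s_k \langle h,\tilde{e}_k^{(n)} \rangle_{H_1} f_k
\end{align*}
satisfies $\| \tilde{T}_n - T_n \| \leq \epsilon_n$; this is possible because, by the orthonormality of $(f_k)_{k \in \mathbb{N}}$ in $H_2$ and the Cauchy--Schwarz inequality, for every $h \in H_1$ one has $\| \tilde{T}_n h - T_n h \|_{H_2}^2 = \sum_{k=1}^n s_k^2 | \langle h,\tilde{e}_k^{(n)} - e_k \rangle_{H_1} |^2 \leq \big( \sum_{k=1}^n s_k^2 \| \tilde{e}_k^{(n)} - e_k \|_{H_1}^2 \big) \| h \|_{H_1}^2$.

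Next I would set $r_t^{(n)} := \tilde{T}_n(r_t)$ for $t \geq 0$, which is an $F_n$-valued process, and check that it is an It\^{o} process. This is the one place where membership in $\mathcal{D}(A^*)$ is used: for each fixed $n$ and $k$, the definition of the weak solution $r$ applied to $\zeta = \tilde{e}_k^{(n)}$ shows that the real-valued process $t \mapsto \langle \tilde{e}_k^{(n)},r_t \rangle_{H_1}$ is an It\^{o} process, with drift $\langle A^* \tilde{e}_k^{(n)},r_s \rangle_{H_1} + \langle \tilde{e}_k^{(n)},\alpha(s,r_s) \rangle_{H_1}$, diffusion $\langle \tilde{e}_k^{(n)},\sigma(s,r_s) \rangle_{H_1}$ and jump coefficient $\langle \tilde{e}_k^{(n)},\gamma(s,r_{s-},\xi) \rangle_{H_1}$, the Lipschitz and linear growth assumptions ensuring that these integrands are admissible. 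Hence $r^{(n)} = \sum_{k=1}^n s_k \langle \tilde{e}_k^{(n)},r_\bullet \rangle_{H_1} f_k$, being a finite linear combination of real-valued It\^{o} processes with the constant vectors $f_k \in F_n$ as coefficients, is an $F_n$-valued It\^{o} process.

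Finally, working with the c\`{a}dl\`{a}g $H_1$-valued version of $r$ (so that $\| r_t \|_{H_1} < \infty$ for all $t$ on a set of full probability), the estimate (\ref{est-epsilon-n}) follows pathwise from the triangle inequality together with what is already available: for every $t \geq 0$,
\begin{align*}
\| r_t^{(n)} - r_t \|_{H_2} &\leq \| \tilde{T}_n(r_t) - T_n(r_t) \|_{H_2} + \| T_n(r_t) - r_t \|_{H_2}
\\ &\leq \big( \| \tilde{T}_n - T_n \| + \| T_n - {\rm Id} \| \big) \| r_t \|_{H_1} \leq \big( \epsilon_n + \| T_n - {\rm Id} \| \big) \| r_t \|_{H_1},
\end{align*}
and the right-hand side tends to $0$ as $n \rightarrow \infty$ because $\epsilon_n \rightarrow 0$ and $\| T_n - {\rm Id} \| \rightarrow 0$. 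The only substantive ingredient is the observation in the second paragraph that, once the coordinate vectors are chosen in $\mathcal{D}(A^*)$, the associated coordinates of the weak solution become genuine It\^{o} processes; the density of $\mathcal{D}(A^*)$, the choice of the $\tilde{e}_k^{(n)}$, and the operator-norm bookkeeping are all routine.
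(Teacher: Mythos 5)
Your proposal is correct and follows essentially the same route as the paper: approximate the $e_k$ by elements of the dense subspace $\mathcal{D}(A^*)$, control the resulting perturbation of $T_n$ in operator norm by $\epsilon_n$, and use the weak-solution identity to see that the perturbed finite-rank projection of $r$ is an It\^{o} process. The only cosmetic differences are that you bound $\| \tilde{T}_n - T_n \|$ via orthonormality of $(f_k)$ and Cauchy--Schwarz where the paper uses a $2^{-k}$-weighted triangle-inequality estimate, and that you define $r^{(n)} := \tilde{T}_n(r)$ and verify it is an It\^{o} process rather than writing the It\^{o} process first and identifying it with $S_n(r)$.
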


\begin{proof}
According to \cite[Theorems~13.35.c and 13.12]{Rudin}, the domain $\mathcal{D}(A^*)$ is dense in $H_1$. Therefore, for each $n \in \mathbb{N}$ there exist elements $\zeta_1^{(n)},\ldots,\zeta_n^{(n)} \in \mathcal{D}(A^*)$ such that
\begin{align*}
\| \zeta_k^{(n)} - e_k \|_{H_1} < \frac{\epsilon_n}{2^k \cdot s_k} \quad \text{for all $k=1,\ldots,n$,}
\end{align*}
where we use the convention $\frac{x}{0} := \infty$ for $x > 0$.
We define the sequence $(S_n)_{n \in \mathbb{N}}$ of finite-rank operators as
\begin{align*}
S_n : H_1 \rightarrow F_n, \quad S_n h := \sum_{k=1}^n s_k \langle h,\zeta_k^{(n)} \rangle_{H_1} f_k.
\end{align*}
By the geometric series, for all $n \in \mathbb{N}$ we have
\begin{align*}
\| S_n - {\rm Id} \| &\leq \| S_n - T_n \| + \| T_n - {\rm Id} \| \leq \sum_{k=1}^n s_k \| \langle \bullet,\zeta_k^{(n)} - e_k \rangle_{H_1} \| + \| T_n - {\rm Id} \| 
\\ &\leq \epsilon_n \sum_{k=1}^n \frac{1}{2^k} + \| T_n - {\rm Id} \| \leq \epsilon_n + \| T_n - {\rm Id} \|.
\end{align*}
For each $n \in \mathbb{N}$ let $r^{(n)}$ be the $F_n$-valued It\^{o} process
\begin{align*}
r_t^{(n)} = h_0^{(n)} + \int_0^t \alpha_s^{(n)} ds + \int_0^t \sigma_s^{(n)} dW_s + \int_0^t \int_E \delta_s^{(n)}(\xi) (\mathfrak{p}(ds,d\xi) - \nu(d\xi,ds)),
\end{align*}
with parameters given by
\begin{align*}
h_0^{(n)} &= \sum_{k=1}^n s_k \langle \zeta_k^{(n)},h_0 \rangle_{H_1} f_k, \quad \alpha_t^{(n)} = \sum_{k=1}^n s_k \big( \langle A^* \zeta_k^{(n)},r_t \rangle_{H_1} + \langle \zeta_k^{(n)},\alpha(t,r_t) \rangle_{H_1} \big) f_k,
\\ \sigma_t^{(n)} &= \sum_{k=1}^n s_k \langle \zeta_k^{(n)},\sigma(t,r_t) \rangle_{H_1} f_k, \quad \delta_t^{(n)}(\xi) = \sum_{k=1}^n s_k \langle \zeta_k^{(n)},\delta(t,r_{t-},\xi) \rangle_{H_1} f_k.
\end{align*}
Since $r$ is a weak solution to (\ref{SPDE}), we obtain almost surely
\begin{align*}
S_n(r_t) &= \sum_{k=1}^n s_k \langle \zeta_k^{(n)},r_t \rangle_{H_1} f_k
\\ &= \sum_{k=1}^n s_k \bigg( \langle \zeta_k^{(n)},h_0 \rangle_{H_1} + \int_0^t \big( \langle A^* \zeta_k^{(n)},r_s \rangle_{H_1} + \langle \zeta_k^{(n)},\alpha(s,r_s) \rangle_{H_1} \big) ds 
\\ &\qquad\qquad\,\,\, + \int_0^t \langle \zeta_k^{(n)},\sigma(s,r_s) \rangle_{H_1} dW_s 
\\ &\qquad\qquad\,\,\, + \int_0^t \int_E \langle \zeta_k^{(n)},\delta(s,r_{s-},\xi) \rangle_{H_1} (\mathfrak{p}(ds,d\xi) - \nu(d\xi,ds)) \bigg) f_k
\\ &= h_0^{(n)} + \int_0^t \alpha_s^{(n)} ds + \int_0^t \sigma_s^{(n)} dW_s + \int_0^t \int_E \delta_s^{(n)}(\xi) (\mathfrak{p}(ds,d\xi) - \nu(d\xi,ds)) 
\\ &= r_t^{(n)} \quad \text{for all $t \geq 0$,}
\end{align*}
which finishes the proof.
\end{proof}

We shall conclude this section with further consequences regarding the speed of convergence of the approximations $(r^{(n)})_{n \in \mathbb{N}}$ provided by Proposition~\ref{prop-approx}. Let $h_0 \in H_1$ be an arbitrary initial condition and denote by $r$ the weak solution to (\ref{SPDE}). Furthermore, let $T > 0$ be a finite time horizon. Since
\begin{align*}
\mathbb{E} \bigg[ \sup_{t \in [0,T]} \| r_t \|_{H_1}^2 \bigg] < \infty,
\end{align*}
see, e.g., \cite[Corollary~10.3]{SPDE}, by (\ref{est-epsilon-n}) there exists a constant $K > 0$ such that
\begin{align*}
\mathbb{E} \bigg[ \sup_{t \in [0,T]} \| r_t^{(n)} - r_t \|_{H_2}^2 \bigg]^{1/2} \leq K (\| T_n - {\rm Id} \| + \epsilon_n) \rightarrow 0,
\end{align*}
providing a uniform estimate for the distance of $r^{(n)}$ and $r$ in the mean-square sense. Moreover, considering the pure diffusion case
\begin{align*}
\left\{
\begin{array}{rcl}
dr_t & = & (A r_t + \alpha(t,r_t)) dt + \sigma(t,r_t)dW_t \medskip
\\ r_0 & = & h_0,
\end{array}
\right.
\end{align*}
the sample paths of $r$ are continuous, for every constant $K > \| h_0 \|_{H_1}$ the stopping time
\begin{align*}
\tau := \inf \{ t \geq 0 : \| r_t \| \geq K \}
\end{align*}
is strictly positive, and by (\ref{est-epsilon-n}) for the stopped processes we obtain almost surely
\begin{align}\label{uni-local}
\sup_{t \in \mathbb{R}_+} \| r_{t \wedge \tau}^{(n)} - r_{t \wedge \tau} \|_{H_2} \leq K (\| T_n - {\rm Id} \| + \epsilon_n) \rightarrow 0,
\end{align}
i.e., locally the solution $r$ stays in a bounded subset of $H_{\gamma}$ and we obtain the uniform convergence (\ref{uni-local}).

\section*{Acknowledgement}

The author is grateful to Vidyadhar Mandrekar for posing the question treated in this paper. The author also would like to thank an anonymous referee for valuable comments and suggestions.

\end{document}